\newtheorem{theorem}{Theorem}[section]
\newtheorem{corollary}[theorem]{Corollary}
\theoremstyle{remark}
\newtheorem{remark}[theorem]{Remark}
\numberwithin{equation}{section}
\newcommand{\defn}[1]{{\emph{#1}}} 
\newcommand{\C}{\mathbb C}
\newcommand{\Z}{\mathbb Z}
\newcommand{\eqdef}{\mbox{\,\raisebox{0.2ex}{\scriptsize\ensuremath{\mathrm:}}\ensuremath{=}\,}} 
\DeclareMathOperator{\comp}{comp}
\newcommand{\eBinom}[3]{{\begin{bmatrix}#1\\#2\end{bmatrix}}_{#3}} 
\newcommand{\qBinom}[2]{{\begin{bmatrix}#1\\#2\end{bmatrix}}_{q}} 
\newcommand{\elliptic}[1]{[#1]_{a,b;q,p}}	
\definecolor{dblackcolor}{rgb}{0.0,0.0,0.0}
\definecolor{dredcolor}{rgb}{0.9,0.3,0.4}
\definecolor{dbluecolor}{rgb}{0.01,0.02,0.7}
\definecolor{dgreencolor}{rgb}{0.2,0.5,0.0}
\definecolor{dgraycolor}{rgb}{0.30,0.3,0.30}
\definecolor{gr}{rgb}{0.10,0.5,0.20}
\definecolor{myblue}{HTML}{0489B1}
\definecolor{myred}{HTML}{A40000}
\newcommand{\da}{\hspace{-2pt}\downarrow}
\newcommand{\ua}{\hspace{-2pt}\uparrow}
\newcommand{\va}{\textbf{a}}
\newcommand{\vx}{\textbf{x}}
\newcommand{\fall}[3]{#1 \da_{#3}^{#2}}
\newcommand{\fac}[2]{{#1}_{#2}!}
\newcommand{\cf}{\textit{cf.}~} 
\author[Josef K\"ustner]{Josef K\"ustner$^{*}$}
\address{Fakult\"at f\"ur Mathematik, Universit\"at Wien,
	Oskar-Morgenstern-Platz~1, A-1090 Vienna, Austria}
\email{josef.kuestner@univie.ac.at}
\thanks{$^{*}$ The author was partially supported by Austrian Science Fund FWF
10.55776/P32305.}
      \title[Explicit formulae]{Explicit formulae for generalized Stirling and Eulerian numbers}
\subjclass[2010]{Primary 05A10;
	Secondary 05A30, 05E05, 05A15, 11B65, 11B83, 11B73, 33E05}
      \keywords{Stirling numbers, rook numbers, Lah numbers, Eulerian numbers, difference operator, $q$-analogues, elliptic hypergeometric series}
\begin{document}

\begin{abstract}
In this article we generalize the $q$-difference operator due to Carlitz in order to derive explicit sum formulae for several extensions of Stirling numbers of the second kind, including complete homogeneous symmetric functions, complementary symmetric functions, $r$-Whitney numbers and elliptic analogues of rook, Stirling and Lah numbers. 
Furthermore, we generalize Carlitz' $q$-Eulerian numbers to a Lagrange polynomial extension. We define them by generalizing Worpitzky's identity appropriately, and derive a recursion and an explicit sum formulae. Special cases include $r$-Whitney Eulerian numbers and elliptic Eulerian numbers.
\end{abstract}

\maketitle

\section{Introduction}\label{sec:stirling_intro}
The classical \defn{Stirling numbers of the second kind}, $S(n,k)$, count set partitions of an $n$-elementary set into $k$ non-empty sets. They fulfil the recurrence relation
\begin{subequations} \label{eqn:stirling_rec}
	\begin{align}
		S(n,k) &= 0, \qquad \text{for $k<0$ or $k>n$}, \\
		S(0,0) &= 1, \\
		\intertext{and, for $k \geq 0$,}
		S(n+1,k) &= S(n,k-1) + k S(n,k).
	\end{align}
\end{subequations}
Given a nonnegative integer $n$ and a variable $z$, we define the corresponding \defn{falling factorial} as
\[
z \da_n \eqdef z(z-1)\cdots (z-n+1)
\]
for $n>0$ and $z \da_0 = 1$. Then, $S(n,k)$ are the connection coefficients in
\begin{align}\label{eqn:stirling_con}
	z^n = \sum_{k=0}^{n} S(n,k) z \da_k.
\end{align}
Let $E_z$ be the shift operator that shifts a polynomial $p(z)$ by $E_zp(z) = p(z+1)$ and $\Delta$ be the difference operator $\Delta = E_z - I$. Then it is a classical result that we can derive an explicit formula for $S(n,k)$ from \eqref{eqn:stirling_con} by applying the $k$-th power of the difference operator $\Delta$ to both sides of the equation to obtain
\begin{align}
	\label{eqn:stirling_exp}
	S(n,k) = \frac{1}{k!} \sum_{j = 0}^{k} (-1)^j \binom{k}{j} (k-j)^n.
\end{align}
We recommend \cite{Cig1,Rota} for further reading on the theory of difference operators.

Following Carlitz~\cite{C2,C1}, we can define \defn{$q$-Stirling numbers of the second kind}, $S_q(n,k)$, by
\begin{equation}\label{eqn:qstirling_con}
	[z]_q^n = \sum_{k=0}^{n} q^{\binom{k}{2}} S_q(n,k) [z]_q [z-1]_q\dots [z-k+1]_q,
\end{equation}
where
\begin{equation}
	[z]_q \eqdef \frac{1-q^z}{1-q}
\end{equation}
is the $q$-number of $z$. 
We define \defn{$q$-falling factorials} by
\begin{equation}
z \da_n^q \eqdef z(z-[1]_q)\cdots (z-[n-1]_q),\text{ for $n>0$,} \quad z \da_0^q=1,
\end{equation}
and use the identity $$[m]_q - [n]_q = [m-n]_q q^n$$ to reformulate equation \eqref{eqn:qstirling_con} as
\begin{equation}\label{eqn:qstirling_con2}
	[z]_q^n = \sum_{k=0}^{n} S_q(n,k) [z]_q \da_k^q.
\end{equation}
One can derive from this relation that $S_q(n,k)$ satisfies
\begin{subequations} \label{eqn:qstirling_rec}
	\begin{align}
		S_q(n,k) &= 0, \qquad \text{for $k<0$ or $k>n$}, \\
		S_q(0,0) &= 1, \\
		\intertext{and, for $k \geq 0$,}
		S_q(n+1,k) &= S_q(n,k-1) + [k]_q S_q(n,k).
	\end{align}
\end{subequations}
In order to derive an explicit sum formula for these numbers, Carlitz generalizes the difference operator $\Delta$ by the recursive definition
\begin{subequations}
	\begin{align}
		\Delta_q p(z) &= p(z+1) - p(z),\\
		\Delta_q^{n+1} p(z) &= \Delta_q^n p(z+1) - q^{n} \Delta_q^n p(z)\\
		\intertext{for $n>0$, or explicitly by}
		\Delta_q^{n} &= \prod_{i=1}^{n} (E_z - q^{n-i}).
	\end{align}
\end{subequations}
Applying $\Delta_q^k$ to both sides of $\eqref{eqn:qstirling_con}$, Carlitz derives  (\cite[Equation (3.3)]{C1})
\begin{align}\label{eqn:qstirling_sum}
	S_q(n,k) = \frac{q^{-\binom{k}{2}}}{[k]_q!} \sum_{j=0}^{k} (-1)^j q^{\binom{j}{2}} \qBinom{k}{j} [k-j]_q^n,
\end{align}
where the \defn{$q$-factorials} are defined by 
\begin{equation}
	[n]_q! \eqdef \prod_{i=1}^{n} [i]_q \quad \text{for $n>0$ and} \quad [0]_q! = 1,
\end{equation} 
and the $q$-binomial coefficient is defined as
\begin{equation}
	\qBinom{n}{k} \eqdef \frac{[n]_q!}{[k]_q! [n-k]_q!}.
\end{equation}

In \cite{KS}, Keresk\'{e}nyin\'{e} Balogh and Schlosser study an elliptic analogue of Carlitz' $q$-Stirling numbers of the second kind. We introduce \defn{elliptic falling factorials} by the definition 
\[
z \da_n^{a,b;q,p} \eqdef z(z-[1]_{a,b;q,p})\cdots(z-[n-1]_{a,b;q,p}),
\]
and $z \da_0^{a,b;q,p}=1$, where $[z]_{a,b;q,p}$ is the elliptic number of $z$, which will be defined in the next section.
Then, the \defn{elliptic Stirling numbers of the second kind}, $S_{a,b;q,p}(n,k)$, can be defined by the relation
\begin{equation}\label{eqn:estirling_con}
	[z]_{a,b;q,p}^n = \sum_{k=0}^{n} S_{a,b;q,p}(n,k) [z]_{a,b;q,p} \da_k^{a,b;q,p}.
\end{equation}

In \cite{SYRook}, Schlosser and Yoo study elliptic Stirling numbers of the second kind in the context of rook theory. They ask for a generalization of the explicit sum formula \eqref{eqn:qstirling_sum} to $S_{a,b;q,p}(n,k)$ and work out special cases for $0 \leq k \leq 3$. Furthermore, the authors suggest using an appropriate extension of the $q$-difference operator $\Delta_q$ to achieve this goal.

In this article, we deduce an explicit formula for $S_{a,b;q,p}(n,k)$ analogous to \eqref{eqn:qstirling_sum} by using the fact that elliptic Stirling numbers of the second kind are a specialization of the complete homogeneous symmetric functions $h_k$. An explicit sum-formula for $h_k$ is well-known, and we obtain a formula for $S_{a,b;q,p}(n,k)$ as a special case. Furthermore, we will generalize Carlitz' $q$-difference operator appropriately to deduce a new operator proof of the formula for $h_k$. Special cases also contain Stirling numbers of type $B$ and $(q,r)$-Whitney numbers of the second kind.

We show that this operator proof can be generalized to prove explicit sum formulae for more general polynomials, including elliptic rook numbers and Lah numbers.

Another very interesting number sequence linked to Stirling numbers is the Eulerian number sequence. They were also generalized to the $q$-case by Carlitz~\cite{C3} but an elliptic extension of these numbers has not been defined so far. In Section~\ref{sec:eulerian} we study a generalization of Eulerian numbers similarly to the generalization of Stirling numbers of the second kind and derive explicit summation formulae. As special cases, we obtain elliptic Eulerian numbers and a $q$-analogue of $r$-Whitney Eulerian numbers. 

\section{Elliptic Stirling numbers}
In the past 15 years, several authors have studied elliptic function analogues of $q$-analogues of combinatorial objects and special combinatorial numbers \cite{BCK,Betea,BGR,KSY,Schl0,Schl1,Schl2,SYRook}. This has led to interesting extensions of weight functions assigned to several combinatorial models like lattice paths, lozenge tilings or rook placements. This study is closely connected to the theory of elliptic hypergeometric series, which naturally extends the theory of basic hypergeometric series. An introduction to elliptic hypergeometric series can be found in \cite[Chapter 11]{GR}, \cite{JosefDiss} and \cite{Ros}. In \cite{KS} and \cite{SYRook}, elliptic analogues of Stirling numbers, Lah numbers, and rook numbers were introduced. A main goal of this article is to find explicit summation formulae of these analogues. In this section, we will recall the most important notations and identities for the theory of elliptic functions. 

We begin by defining the \defn{modified Jacobi theta function} as
\[
\theta(x;p) \eqdef \prod_{j\geq 0} 
\left(
(1-p^jx)(1-\frac{p^{j+1}}{x})
\right),
\quad
\theta(x_1,\dots, x_\ell;p) = 
\prod_{k=1}^\ell \theta(x_k;p),
\]
where $x,x_1,\dots,x_\ell \neq 0$ and $|p|<1$.
The modified Jacobi theta function satisfies some fundamental
relations \cite[cf. p.~451, Example~5]{Web}, like the inversion formula
\begin{subequations}
	\begin{align}\label{eqn:ellinversion}
		\theta(x;p)=-x \theta(1/x;p),
	\end{align}
	the quasi-periodicity relation
	\begin{align}
		\theta(px;p)=-\frac{1}{x} \theta(x;p),
	\end{align}
	and the three-term identity
	\begin{align}\label{eqn:threeterm}
		\theta(xy,x/y,uz,u/z;p)=
		\theta(uy,u/y,xz,x/z;p)+
		\frac{x}{z}
		\theta(zy,z/y,ux,u/x;p).
	\end{align}
\end{subequations}
An \defn{elliptic function} is a function of a complex variable that is
meromorphic and doubly periodic, and it is well known that elliptic functions can
be written as quotients of modified Jacobi theta functions \cite{Ros,Web}.

An example, which we will use throughout this article, is the \defn{elliptic number} of a complex number $z$,
\begin{align}\label{def:ellnumber}
	\elliptic{z} \eqdef 
	\frac
	{\theta(q^z,aq^z,bq,aq/b;p)}
	{\theta(q,aq,bq^{z},aq^{z}/b;p)}.
\end{align}
Here, $a$ and $b$ are two additional parameters.\footnote{This definition follows the definition in \cite{logconcav_schlosser_2020} and corresponds to the definition in \cite{SYRook} subject to the substitution $b \mapsto bq^{-1}$.}
The elliptic number is a generalization of the $q$-number: Taking the limit $p\rightarrow 0$, then $a\rightarrow 0$ and then $b\rightarrow 0$, one recovers the $q$-analogue $[z]_q$.

The function $\elliptic{z}$ is elliptic in $a$, $b$, $q$ and $q^z$ each with the same multiplicative period $p$ (see also \cite[Remark~4]{SYRook}).  

We also define the elliptic weight
\begin{equation}
	W_{a,b;q,p}(k) \eqdef 
	\frac
	{\theta(aq^{2k+1},b,bq,a/b,aq/b;p)}
	{\theta(aq,bq^{k},bq^{k+1},aq^{k}/b,aq^{k+1}/b;p)}q^k,
\end{equation}
which is again an elliptic function and reduces to $q^k$ by taking the limit $p\rightarrow 0$, $a\rightarrow 0$ and $b\rightarrow 0$ (in this order). 

As an application of the three-term identity \eqref{eqn:threeterm} we obtain the following factorization for the elliptic number for complex values $y$ and $z$ \cite[Equation (3.8b)]{SYRook}:
\begin{align}\label{eqn:ellnumber_id}
	\elliptic{y+z} = \elliptic{y} + W_{a,b;q,p}(y) [z]_{aq^{2y},bq^{y};q,p}.
\end{align}
In this article we will also use the relations (\cite[Equation (3.3b)]{SYRook})
\begin{equation}\label{eqn:weightshift}
	W_{a,b;q,p}(k+j) = W_{a,b;q,p}(j)  W_{aq^{2j},bq^{j};q,p}(k)
\end{equation}
and (\cite[Corollary 4.3]{KSY})
\begin{equation}\label{eqn:minuselliptic}
	\elliptic{-k}=-W_{a,b;q,p}(-1)[k]_{1/a,b/a;q,p}. 
\end{equation}

Now we are ready to define elliptic Stirling numbers of the second kind. Recall the definition of \defn{elliptic falling factorials} 
\[
z \da_n^{a,b;q,p} \eqdef z(z-[1]_{a,b;q,p})\cdots(z-[n-1]_{a,b;q,p}) \quad \text{and} \quad z \da_0^{a,b;q,p}=1.
\]
and the definition of \defn{elliptic Stirling numbers of the second kind}, $S_{a,b;q,p}(n,k)$,
\begin{equation}
	[z]_{a,b;q,p}^n = \sum_{k=0}^{n} S_{a,b;q,p}(n,k) [z]_{a,b;q,p} \da_k^{a,b;q,p}.
\end{equation}
Note that each factor $[z]_{a,b;q,p}-[j]_{a,b;q,p}$ in the elliptic falling factorial of $[z]_{a,b;q,p}$ factorizes to
\begin{align*}
	[z]_{a,b;q,p}-[j]_{a,b;q,p} = W_{a,b;q,p}(j)[z-j]_{aq^{2j},bq^{j};q,p}
\end{align*}
by \eqref{eqn:ellnumber_id}.\footnote{\label{definition_note} This definition is slightly different to the definition in \cite{KS}. The original definition is obtained by multiplying $S_{a,b;q,p}(n,k)$ with $\prod_{j=0}^{k-1} W_{a,b;q,p}(j)$, but it turns out that it is convenient to stick to the definition \eqref{eqn:estirling_con}.}

From \eqref{eqn:estirling_con} one can deduce the following recurrence relation for $S_{a,b;q,p}(n,k)$:
\begin{subequations} \label{eqn:estirling_rec}
	\begin{align}
		S_{a,b;q,p}(n,k) &= 0, \qquad \text{for $k<0$ or $k>n$}, \\
		S_{a,b;q,p}(0,0) &= 1, \\
		\intertext{and, for $k \geq 0$,}
		S_{a,b;q,p}(n+1,k) &= S_{a,b;q,p}(n,k-1) + [k]_{a,b;q,p} S_{a,b;q,p}(n,k).
	\end{align}
\end{subequations}
In the next section, we will focus on a generalization of elliptic Stirling numbers, the complete homogeneous symmetric functions, before we return to the elliptic case. 
\section{Complete homogeneous symmetric functions}
Complete homogeneous symmetric functions, $h_k(a_0,a_1,a_2,\dots,a_n)$, are defined by
\[
h_k(a_0,a_1,a_2,\dots,a_n) = \sum_{0 \leq j_1 \leq j_2 \leq \dots \leq j_k \leq n} a_{j_1} a_{j_2} \dots a_{j_k}.
\]
We introduce a generalization of falling factorials and factorials for a series of variables $\va = a_0,a_1,a_2,\dots$ (following \cite{SS}) as
\begin{equation}
	\fall{z}{\va}{0} \eqdef 1, \quad \fall{z}{\va}{n} \eqdef \prod_{i=0}^{n-1} (z-a_i) \quad \text{and} \quad \fac{a}{n} \eqdef \fall{a_n}{\va}{n}.
\end{equation}
It is a well-known fact that $h_k$ satisfies the relation
\begin{align}\label{eqn:op_h_gf}
	z^n = \sum_{k=0}^{n} h_{n-k}(a_0,a_1,\dots,a_{k}) \fall{z}{\va}{k}.
\end{align}
This equation appears, for example, in \cite[Theorem 2.2]{SS} and \cite[p. 208]{DAL}.
By setting $a_i=[i]_q$ for all $i\geq 0$ and $z=[z]_q$ in this equation, we obtain the generating function \eqref{eqn:qstirling_con2} of Carlitz' $q$-Stirling numbers of the second kind. This leads to the specialization 
\[
S_q(n,k) = h_{n-k}([0]_q,[1]_q,\dots,[k]_q),
\]
which is also well known and appeared, for example, recently in \cite{SS}. 

By setting $a_i=[i]_{a,b;q,p}$ for all $i$ and $z=[z]_{a,b;q,p}$ in \eqref{eqn:op_h_gf}, we obtain the generating function for elliptic Stirling numbers of the second kind \eqref{eqn:estirling_con}, therefore
\begin{align}\label{eqn:estirling_h}
	S_{a,b;q,p}(n,k) = h_{n-k}([0]_{a,b;q,p},[1]_{a,b;q,p},\dots,[k]_{a,b;q,p}).
\end{align}
By multiplying both sides of \eqref{eqn:op_h_gf} with $z$ it follows that
\begin{align*}
	z^{n+1} &= \sum_{k=0}^{n} h_{n-k}(a_0,a_1,\dots,a_{k}) \fall{z}{\va}{k} (z-a_k+a_k)\\
	&= \sum_{k=0}^{n} h_{n-k}(a_0,a_1,\dots,a_{k}) \fall{z}{\va}{k+1} + \sum_{k=0}^{n} a_k h_{n-k}(a_0,a_1,\dots,a_{k}) \fall{z}{\va}{k} \\
	&= \sum_{k=0}^{n+1} \big( h_{n-k+1}(a_0,a_1,\dots,a_{k-1})+a_k h_{n-k}(a_0,a_1,\dots,a_{k}) \big) \fall{z}{\va}{k}
\end{align*}
and we can derive the recursion
\begin{subequations}
	\begin{align}
		h_{n-k}(a_0,a_1,\dots,a_{k}) &= 0 \quad \text{ for $k<0$ or $k>n$}, \\
		h_0(a_0) &= 1\\
		\intertext{and, for $k\geq 0$,}
		h_{n-k+1}(a_0,a_1,\dots,a_{k}) &= h_{n-k+1}(a_0,a_1,\dots,a_{k-1})+a_k h_{n-k}(a_0,a_1,\dots,a_{k})
	\end{align}
\end{subequations}
analogous to \eqref{eqn:stirling_rec}, \eqref{eqn:qstirling_rec} and \eqref{eqn:estirling_rec}. 

The complete homogeneous symmetric functions can be computed by the following explicit formula (\cf \cite{GM}, where this formula was obtained as a special case of a Schur function identity, or \cite[Exercise 7.4]{Stanley2}).

\begin{theorem}\label{thm:h_expl}
	Let $n,k$ be nonnegative integers, $a_0,a_1,\dots,a_k$ be variables and $h_n(a_0,\dots,a_k)$ the $n$-th complete homogeneous symmetric function in $k+1$ variables. We then have
	\begin{equation}\label{eqn:h_expl}
		h_{n}(a_0,\dots,a_k) = \sum_{j=0}^{k}\Big( \prod_{\substack{i=0\\ i\neq j}}^{k} (a_j-a_i)^{-1} \Big) a_{j}^{n+k}. 
	\end{equation}
\end{theorem}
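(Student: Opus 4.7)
The plan is to prove Theorem~\ref{thm:h_expl} via the classical generating function identity
\[
\sum_{n \ge 0} h_n(a_0,\ldots,a_k)\,t^n = \prod_{i=0}^{k}\frac{1}{1-a_i t}
\]
combined with partial fractions. Treating $a_0,\ldots,a_k$ as generic (so in particular pairwise distinct) variables, I would decompose
\[
\prod_{i=0}^{k}\frac{1}{1-a_i t} = \sum_{j=0}^{k} \frac{c_j}{1-a_j t}
\]
and determine the coefficients $c_j$ by the cover-up method: multiplying both sides by $(1-a_j t)$ and specializing at $t=1/a_j$ gives
\[
c_j = \prod_{i\ne j}\frac{1}{1-a_i/a_j} = \frac{a_j^{k}}{\prod_{i\ne j}(a_j-a_i)}.
\]

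Expanding each summand as a geometric series $(1-a_j t)^{-1} = \sum_{n\ge 0} a_j^n t^n$ and extracting the coefficient of $t^n$ on both sides then yields
\[
h_n(a_0,\ldots,a_k) = \sum_{j=0}^{k} c_j\, a_j^n = \sum_{j=0}^{k} \frac{a_j^{n+k}}{\prod_{i\ne j}(a_j-a_i)},
\]
which is precisely \eqref{eqn:h_expl}.

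The main (mild) subtlety is that the right-hand side, as written, only makes sense when the $a_i$ are pairwise distinct. This is not really an obstacle: one regards the identity as one in the field $\mathbb{Q}(a_0,\ldots,a_k)$ of rational functions in the indeterminates $a_0,\ldots,a_k$, so the manipulations above are valid without further qualification; the fact that the right-hand side, once a common denominator is cleared, is a genuine polynomial in the $a_i$ follows automatically from the polynomial nature of the left-hand side. An alternative approach, closer in spirit to the surrounding material, would be to mimic Carlitz' derivation of \eqref{eqn:qstirling_sum} by introducing a suitable generalized difference operator adapted to the variables $\va$ and applying it to both sides of \eqref{eqn:op_h_gf}; this appears to be the route the author pursues later in the paper.
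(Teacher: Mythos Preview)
Your proof is correct and is the classical argument for this identity (this is essentially the solution to \cite[Exercise~7.4]{Stanley2}). It is, however, genuinely different from what the paper does. The paper introduces a generalized difference operator $\Delta_{\vx}^n$ (built from shift operators in an auxiliary set of variables), applies $\Delta_{\vx}^j$ to both sides of the connection-coefficient identity \eqref{eqn:op_h_gf}, and then verifies via a Lagrange interpolation argument that $\Delta_{\vx}^j$ acts on the generalized falling factorials $\fall{a_z}{\va}{k}$ (after specialization) as $\fac{a}{j}\,\delta_{j,k}$. Your partial-fraction route is shorter and more elementary for this particular theorem, but the operator proof is the point of the paper here: it is designed so that the same machinery applies verbatim when $a_z^n$ is replaced by an arbitrary polynomial $f(z)=c_0\prod_{i=1}^n(a_z-c_i)$, yielding the explicit formula \eqref{eqn:general_alpha} and hence the results on elliptic rook and Lah numbers in Section~5. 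You correctly anticipated this in your final remark.
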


By setting $a_i = i$, respectively $a_i = [i]_q$, for all $i$ and doing some small manipulations, we recover the sum-formulae for $S(n,k)$ in \eqref{eqn:stirling_exp} and $S_q(n,k)$ in \eqref{eqn:qstirling_sum}, respectively. 

To complete the task of finding an explicit formula for $S_{a,b;q,p}(n,k)$, described by Schlosser and Yoo \cite{SYRook}, we set $a_i=\elliptic{i}$, change the indices according to \eqref{eqn:estirling_h} and reverse the order of summation, to obtain the following corollary.

\begin{corollary}
	Let $n,k$ be nonnegative integers, $\elliptic{n}$ be the elliptic number of $n$ and $S_{a,b;q,p}(n,k)$ be the elliptic Stirling numbers of the second kind defined by \eqref{eqn:estirling_con}. We then have
	\begin{equation}
		S_{a,b;q,p}(n,k) = \sum_{j=0}^{k} \Big( \prod_{\substack{i=0\\ i\neq k-j}}^{k} (W_{a,b;q,p}(i)[k-j-i]_{aq^{2i},bq^i;q,p})^{-1} \Big) \elliptic{k-j}^n. 
	\end{equation}
\end{corollary}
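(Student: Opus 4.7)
The plan is to simply specialize Theorem~\ref{thm:h_expl} together with the identification \eqref{eqn:estirling_h} and then rewrite each difference of elliptic numbers via the factorization \eqref{eqn:ellnumber_id}. No new analytic input is needed; the whole argument is bookkeeping.

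First I would set $a_i = [i]_{a,b;q,p}$ and recall from \eqref{eqn:estirling_h} that
\[
S_{a,b;q,p}(n,k) = h_{n-k}([0]_{a,b;q,p},[1]_{a,b;q,p},\dots,[k]_{a,b;q,p}).
\]
Then I would apply Theorem~\ref{thm:h_expl} with $n$ replaced by $n-k$; the exponent $(n-k)+k = n$ conveniently collapses to $n$, giving
\[
S_{a,b;q,p}(n,k) = \sum_{j=0}^{k}\Big( \prod_{\substack{i=0\\ i\neq j}}^{k} ([j]_{a,b;q,p}-[i]_{a,b;q,p})^{-1} \Big) [j]_{a,b;q,p}^{n}.
\]

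Next I would rewrite every factor in the denominator using \eqref{eqn:ellnumber_id} with $y = i$ and $z = j-i$, which yields
\[
[j]_{a,b;q,p} - [i]_{a,b;q,p} = W_{a,b;q,p}(i)\,[j-i]_{aq^{2i},bq^{i};q,p}.
\]
Substituting this into the product over $i\neq j$ gives exactly the asserted form of the summand, but still indexed by $j$ running from $0$ to $k$ with base $[j]_{a,b;q,p}^n$. A terminal change of summation index $j \mapsto k-j$ rewrites this as
\[
S_{a,b;q,p}(n,k) = \sum_{j=0}^{k} \Big( \prod_{\substack{i=0\\ i\neq k-j}}^{k} \bigl(W_{a,b;q,p}(i)\,[k-j-i]_{aq^{2i},bq^{i};q,p}\bigr)^{-1} \Big) [k-j]_{a,b;q,p}^{n},
\]
which is the claimed identity.

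The only point that requires a moment of care is the verification that the factorization of the difference $[j]_{a,b;q,p}-[i]_{a,b;q,p}$ remains valid for all $i\neq j$, including $i>j$ where $j-i$ is negative; but this is already encoded in the definition of $[z]_{a,b;q,p}$ for arbitrary complex $z$ (and compatible with \eqref{eqn:minuselliptic}), so \eqref{eqn:ellnumber_id} applies without restriction. There is no genuine obstacle beyond index matching.
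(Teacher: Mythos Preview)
Your proof is correct and follows essentially the same route as the paper: specialize $a_i=[i]_{a,b;q,p}$ in Theorem~\ref{thm:h_expl} via \eqref{eqn:estirling_h}, factor each difference with \eqref{eqn:ellnumber_id}, and reverse the order of summation. The paper sketches this in one sentence just before the corollary; your write-up simply fills in the intermediate displays.
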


This formula reduces to \eqref{eqn:qstirling_sum} by taking the limits $p\rightarrow 0$, then $a\rightarrow 0$ and then $b\rightarrow 0$.

\begin{remark}
	In contrast to \eqref{eqn:stirling_exp} and \eqref{eqn:qstirling_sum} it seems that there appears no binomial coefficient in the sum-formula for $h_k$ \eqref{eqn:h_expl}, but we can define an $\va$-binomial coefficient for $0 \leq k \leq n$ as
	\[
	\eBinom{n}{k}{\va} \eqdef (-1)^{n-k} \frac{a_n!}{\prod_{\substack{i=0\\ i\neq k}}^{n} (a_k-a_i)} = (-1)^{n-k+1} \prod_{\substack{i=0\\ i\neq k}}^{n-1} \frac{a_n-a_i}{a_k-a_i}
	\]
	From \eqref{eqn:h_expl} we obtain
	\begin{equation}\label{eqn:h_expl2}
		h_{n}(a_0,\dots,a_k) = \frac{1}{a_k!}\sum_{j=0}^{k} (-1)^{k-j} \eBinom{k}{j}{\va}  a_{j}^{n+k}.
	\end{equation}
	This binomial coefficient fulfils the recurrence relation 
	\begin{equation} \label{eqn:xbinom_rec}
		\eBinom{n+1}{k}{\va} = \prod_{j=0}^{n-1} \frac{a_{n+1}-a_{j+1}}{a_n-a_j} \eBinom{n}{k}{\va} + E_{\va} \eBinom{n}{k-1}{\va},
	\end{equation}
	for $0<k< n+1$,	where $E_{\va}$ is an operator that maps all $a_i$ to $a_{i+1}$. 
	However, we chose not to use this notation in this work. 
\end{remark}

\section{A generalized difference operator}

We consider functions $f(z;\vx)$ in the variables $z$ and $\vx=x_0, x_1,x_2\dots$ and recall the generalized notation for the falling factorial and the factorial according to $z$ and $\vx$ as follows: 
\begin{equation}
	\fall{z}{\vx}{0} = 1, \quad \fall{z}{\vx}{n}=\prod_{i=0}^{n-1} (z-x_i) \quad \text{for $n>0$ and} \quad \fac{x}{n}=\fall{x_n}{\vx}{n}.
\end{equation}
We define the \defn{shift operator} $E_{z,\vx}$ as $E_{z,\vx} f(z;x_0,x_1,x_2\dots)=f(z+1;x_1,x_2,x_3\dots)$.  For example, $$E_{z,\vx} \frac{z-x_1+a_2}{x_k-x_2} = \frac{z+1-x_2+a_2}{x_{k+1}-x_3}.$$ The \defn{difference operator} $\Delta_{\vx}^n$ is recursively defined by
\begin{subequations}
	\begin{align}
		\Delta_{\vx} f(z,\vx)&=E_{z,\vx} f(z,\vx) - f(z,\vx),\\
		\Delta_{\vx}^{n+1} f(z,\vx) &= E_{z,\vx} \Delta_{\vx}^n f(z,\vx) - \Big( \prod_{j=0}^{n-1} \frac{x_{n+1}-x_{j+1}}{x_{n}-x_{j}} \Big) \Delta_{\vx}^n f(z,\vx)\\
		\intertext{for $n>0$, or explicitly by}
		\Delta_{\vx}^{n} &= \prod_{i=0}^{n-1} \Big( E_{z,\vx}-\prod_{j=0}^{n-i-2} \frac{x_{n-i}-x_{j+1}}{x_{n-i-1}-x_{j}} \Big).
	\end{align}
\end{subequations}
It follows by induction on $n$ (and \eqref{eqn:xbinom_rec}) that
\begin{align}
	\Delta_{\vx}^{n} = \sum_{k=0}^{n} \fac{x}{n} \Big( \prod_{\substack{i=0\\ i\neq k}}^{n} (x_k-x_i)^{-1} \Big) E_{z,\vx}^k.
\end{align}


We are now ready to prove the explicit formula for $h_n(a_0,a_1,\dots,a_k)$. 
\begin{proof}[Proof of Theorem~\ref{thm:h_expl}]
	Let $C_k$ be the connection coefficients in
	\begin{equation}\label{eqn:beta}
		f(z)=\sum_{k=0}^n C_{k} \fall{a_z}{\va}{k},
	\end{equation}
	where $f(z)$ is a polynomial in $a_z$ of degree at most $n$ and independent of the variables $\vx$. Then the $C_k$ are independent of $z$ and $\vx$.
	We apply $\Delta_{\vx}^j$ to both sides of \eqref{eqn:beta} and set $z=0$ and $x_i=a_i$ for $i\geq 0$ afterwards to obtain
	\[
	\Delta_{\vx}^j f(z) \big|_{z=0,x_i=a_i} = \sum_{k=0}^n C_{k} \Delta_{\vx}^j \fall{a_z}{\va}{k} \big|_{z=0,x_i=a_i}
	\]
	since $C_{k}$ is independent of $z$ and $x_i$. The above notation means that we set $z=0$ and $x_i=a_i$ for $i\geq 0$ after applying the operators to both sides. Evaluating the operator on the right hand side gives
	\[
	\Delta_{\vx}^j \fall{a_z}{\va}{k} = \sum_{s=0}^{j} \fac{x}{j} \prod_{\substack{i=0\\ i\neq s}}^{j} (x_s-x_i)^{-1} \prod_{i=0}^{k-1} (a_{z+s}-a_{i}).
	\]
	In this expression we can set $z=0$ and $x_i=a_i$ for $i\geq 0$ to obtain
	\begin{align*}
		\Delta_{\vx}^j \fall{a_z}{\va}{k} \big|_{z=0,x_i=a_i} = \sum_{s=0}^{j} \fac{a}{j} \frac{\prod_{i=0}^{k-1} (a_{s}-a_{i})}{\prod_{\substack{i=0\\ i\neq s}}^{j} (a_s-a_i)}.
	\end{align*}
	We want to show that this sum simplifies to $\delta_{j,k} \fac{a}{j}$, where $\delta$ is the Kronecker delta.
	
	Suppose $0 \leq j < k$. Then we have a factor $(a_s-a_s)=0$  in the numerator of each summand, and the sum vanishes.
	
	For $k=j$ the sum reduces to $a_k!$ by the same argument.
	
	Suppose $k<j\leq n$. Then, 
	\begin{align*}
		\sum_{s=0}^{j} \fac{a}{j} \frac{\prod_{i=0}^{k-1} (a_{s}-a_{i})}{\prod_{\substack{i=0\\ i\neq s}}^{j} (a_s-a_i)}=\sum_{s=k}^{j} \fac{a}{j} \frac{\prod_{i=0}^{k-1} (a_{s}-a_{i})}{\prod_{\substack{i=0\\ i\neq s}}^{j} (a_s-a_i)}= \sum_{s=k}^{j} \fac{a}{j} \prod_{\substack{i=k\\ i\neq s}}^{j} (a_s-a_i)^{-1}.
	\end{align*}
	In order to prove
	\[
	\sum_{s=k}^{j} \fac{a}{j} \prod_{\substack{i=k\\ i\neq s}}^{j} (a_s-a_i)^{-1} = 0
	\]
	for $k<j$, we assume $\fall{a_j}{\va}{k}\neq 0$ (otherwise the equation would be true as well) and divide both sides by $\fall{a_j}{\va}{k}$. By a simple computation we arrive at the equivalent equation
	\[
	\sum_{s=k}^{j-1} \prod_{\substack{i=k\\i\neq s}}^{j-1}  \frac{(a_{j}-a_i)}{(a_s-a_i)} = 1.
	\]
	The sum on the left side can be seen as a polynomial in $a_{j}$ of degree $j-k-1$ which is equal to $1$ in the $j-k$ values $a_k, a_{k+1}, \dots, a_{j-1}$ inserted for $a_{j}$. In fact, it is the Lagrange interpolation polynomial for the function $f(a_j)=1$. Therefore, the equality holds and we have proven the case $k<j$.
	
	In summary, we have shown that 
	\[
	\frac{\Delta_{\vx}^j f(z)}{\fac{a}{j} } \big|_{z=0,x_i=a_i} = C_{j}.
	\]
	To prove the theorem, we take $f(z)=a_z^n$ to obtain 
	\begin{equation}\label{eqn:Ck}
	C_{k} = \frac{\Delta_{\vx}^k a_z^n}{\fac{a}{k}}\big|_{z=0,x_i=a_i} = \sum_{j=0}^{k}\Big( \prod_{\substack{i=0\\ i\neq j}}^{k} (a_j-a_i)^{-1} \Big) a_{j}^n.
	\end{equation}
	By comparing \eqref{eqn:beta} with \eqref{eqn:op_h_gf} (where $z$ is replaced by $a_z$) we finally arrive at
	\[
	h_{n}(a_0,\dots,a_k) = \sum_{j=0}^{k}\Big( \prod_{\substack{i=0\\ i\neq j}}^{k} (a_j-a_i)^{-1} \Big) a_{j}^{n+k}. \qedhere
	\]
\end{proof}
\begin{remark}[Specializations]
	Besides the elliptic and $q$-Stirling numbers, there are several interesting specializations of $h_n(a_0,a_1,\dots,a_k)$. For example, we obtain $(q,r)$-Whitney numbers of the second kind, $W_{m,r}[n,k]_q$ (see for example \cite{COL} or \cite{Mezo} for $q=1$), by
	$$W_{m,r}[n,k]_q q^{-kr-m\binom{k}{2}}= W_{m,r}^{\ast}[n,k]_q = h_{n-k}([r]_q,[m+r]_q,[2m+r]_q,\dots,[km+r]_q).$$
	Then, Theorem~\ref{thm:h_expl} reduces to \cite[Equation (12)]{COL}. 
	
	Remmel and Wachs defined a $(p,q)$-analogue of these numbers, the shifted Stirling numbers of the second kind. We obtain the $(p,q)$-shifted Stirling numbers of the second kind, $S_{n,k}^{r,m}(p,q)$ \cite[Equation (28)]{RW}, by $$S_{n,k}^{r,m}(p,q) = h_{n-k}([r]_{p,q},[m+r]_{p,q},[2m+r]_{p,q},\dots,[km+r]_{p,q}),$$ which are a $(p,q)$-analogue of $r$-Whitney numbers. Then, one can check by a careful computation that Theorem~\ref{thm:h_expl} reduces to \cite[Theorem 13]{RW}.
	
	An elliptic analogue of $S_{n,k}^{r,m}(p,q)$ was defined by Schlosser and Yoo~\cite{SYRook} and can be obtained by specializing $$S_{a,b;q,p}^{r,m}(n,k) = h_{n-k}([r]_{a,b;q,p},[m+r]_{a,b;q,p},[2m+r]_{a,b;q,p},\dots,[km+r]_{a,b;q,p}),$$ up to a power of elliptic weights (comparable to the remark in footnote \ref{definition_note}).
\end{remark}

\section{More general cases}
The proof of Theorem~\ref{thm:h_expl} can be adapted to any polynomial $f$ in $a_z$ of degree at most $n$ which is independent of the variables $\vx$. Let $C$ and $c_1,c_2,\dots,c_{n}$ be complex numbers. If we take $f(z)=c_0(a_z-c_1)(a_z-c_2)\cdots(a_z-c_n)$, then \eqref{eqn:beta} becomes
\begin{equation}\label{eqn:beta2}
	c_0 \prod_{i=1}^{n} (a_z-c_i)=\sum_{k=0}^n C_{n,k} \fall{a_z}{\va}{k},
\end{equation}
and, following the proof of Theorem~\ref{thm:h_expl}, the coefficients have the explicit form
\begin{equation}\label{eqn:general_alpha}
	C_{n,k} = \frac{\Delta_{\vx}^k f(z)}{\fac{a}{k}}\big|_{z=0,x_i=a_i} = c_0 \sum_{j=0}^{k}\Big( \prod_{\substack{i=0\\ i\neq j}}^{k} (a_j-a_i)^{-1} \Big) \prod_{i=1}^{n} (a_j-c_i).
\end{equation}
If we see $a_0, a_1, \dots, a_n$ as complex numbers, then $C_{n,k}$ is the change of basis coefficient between the two series of polynomials over $\C$
\[
\Big( c_0 \prod_{i=1}^{n} (a_z-c_i) \Big)_{n \geq 0} \qquad \text{and} \qquad \Big( \prod_{i=0}^{n-1} (a_z-a_i) \Big)_{n \geq 0}.
\]
From \eqref{eqn:beta2} we can derive the recurrence relations
\begin{subequations} \label{eqn:alpha_rec}
	\begin{align}
		C_{n,k} &= 0, \qquad \text{for $k<0$ or $k>n$ }, \\
		C_{0,0} &= c_0, \\
		\intertext{and, for $k \geq 0$,}
		C_{n+1,k} &= C_{n,k-1} + (a_k - c_{n+1}) C_{n,k}.
	\end{align}
\end{subequations}

\begin{remark}
	Note that the results discussed in this section are well-studied in other contexts. 
	For $c_0=1$, the coefficient $C_{n,k}$ is equal to the complementary symmetric function $\comp_k(\{c_1,c_2,\dots,c_n | a_0,a_1,\dots,a_{k}\})$ defined over hybrid sets in \cite{DAL}. This can be seen by comparing \eqref{eqn:beta2} with \cite[Corollary 2]{DAL}. 
	
	We can also see the results from the perspective of Newton polynomials and divided differences. The falling factorials $\fall{z}{\vx}{n}=\prod_{i=0}^{n-1} (z-x_i)$ are also known as Newton polynomials, and \eqref{eqn:Ck} is the explicit formula for the divided differences of $z^n$ (see for example \cite[p. 7]{finiteDifferences}). By linearity, this formula extends to arbitrary polynomials in $z$, yielding \eqref{eqn:general_alpha}.
\end{remark}

\subsection{Rook numbers}
In \cite{SYRook}, Schlosser and Yoo define the $k$-th elliptic rook number, $r_k(a,b;q,p;B)$, for a Ferrers board $$B=B(b_1,b_2,\dots,b_n)=\{(i,j): 1 \leq i \leq n, 1 \leq j \leq b_i\}$$
as the sum of specific elliptic weights for rook placements of $B$. We neglect the details here and only focus on the following algebraic expression. The authors obtain the following generating function for the elliptic rook numbers \cite[Theorem 12]{SYRook}:
\begin{multline} \label{eqn:erook_con}
	\prod_{i=1}^{n} [z-i+1+b_i]_{aq^{2(i-1-b_i)},bq^{i-1-b_i};q,p}\\
	=\sum_{k=0}^n r_{n-k}(a,b;q,p;B) \prod_{j=1}^{k} [z-j+1]_{aq^{2(j-1)},bq^{j-1};q,p}.
\end{multline}
We can reformulate this equation by using \eqref{eqn:ellnumber_id} as
\begin{multline}
	\prod_{i=1}^{n} W_{a,b;q,p}(i-1-b_i)^{-1}(\elliptic{z}-\elliptic{i-1-b_i})\\
	=\sum_{k=0}^n r_{n-k}(a,b;q,p;B) \Big( \prod_{j=1}^{k}W_{a,b;q,p}(j-1)^{-1} \Big) \elliptic{z} \da_{k}^{a,b;q,p}.
\end{multline}
Comparing this expression with \eqref{eqn:beta2} yields
\begin{equation}\label{eqn:erook_expl}
	r_{n-k}(a,b;q,p;B) = 
		\sum_{j=0}^{k} \frac{
			W_{a,b;q,p}(j) }{	W_{a,b;q,p}(k)
		} \frac{\prod_{i=1}^{n} [j-i+b_i+1]_{aq^{2(i-1-b_i)},bq^{i-1-b_i};q,p}}{\prod_{\substack{i=0\\ i\neq j}}^{k} [j-i]_{aq^{2i};bq^i;q,p}} 
\end{equation}
as a special case of \eqref{eqn:general_alpha} with $c_0=\prod_{i=1}^{n} W_{a,b;q,p}(i-1-b_i)$, $a_i = \elliptic{i}$, $c_i=\elliptic{i-1-b_i}$ and $$C_{n,k} = r_{n-k}(a,b;q,p;B)  \prod_{j=1}^{k}W_{a,b;q,p}(j-1)^{-1}.$$

These elliptic rook numbers generalize elliptic Stirling numbers of the second kind, which can be recovered by considering the staircase Ferrers board $B(1,2,3,\dots,n-1)$ (with the original definition of elliptic Stirling numbers). 

In the case of $p\to 0$, $b \to 0$ and $B=[n+r-1] \times [n-r]$ the identity \eqref{eqn:erook_expl} can be evaluated in closed form by virtue of the $q$-Pfaff--Saalsch\"utz summation, thus yielding a closed form of the respective $r$-restricted $a;q$-Lah numbers (compare to \cite[Subsection 3.1.1]{SYRook2}).

\subsection{Lah numbers}
Another interesting board considered in \cite{SYRook} is $B(b_1,b_2,\dots,b_n)=B(n-1,n-1,\dots,n-1)$. In this case, the elliptic rook numbers are equal to elliptic Lah numbers, defined in \cite{KS}. We will again use a slightly different definition (up to a power of elliptic weights). We introduce \defn{elliptic rising factorials} by the definition 
\[
z \ua_n^{a,b;q,p} \eqdef z(z-[-1]_{a,b;q,p})\cdots(z-[-n+1]_{a,b;q,p}),
\]
for $n>0$ and $z \ua_0^{a,b;q,p}=1$, and define \defn{elliptic Lah numbers}, $L_{a,b;q,p}(n,k)$, by
\begin{equation}\label{eqn:eLah_con}
	[z]_{a,b;q,p} \ua_n^{a,b;q,p} = \sum_{k=0}^{n} L_{a,b;q,p}(n,k) [z]_{a,b;q,p} \da_k^{a,b;q,p}.
\end{equation}
They fulfil the recurrence relation
\begin{subequations} \label{eqn:elah_rec}
	\begin{align}
		L_{a,b;q,p}(n,k) &= 0, \qquad \text{for $k<0$ or $k>n$ }, \\
		L_{a,b;q,p}(0,0) &= 1, \\
		\intertext{and, for $k \geq 0$,}
		L_{a,b;q,p}(n+1,k) &= L_{a,b;q,p}(n,k-1) + W_{a,b;q,p}(-n) [n+k]_{aq^{-2n},bq^{-n};q,p} L_{a,b;q,p}(n,k).
	\end{align}
\end{subequations}
We derive the explicit expression
\begin{equation}
	L_{a,b;q,p}(n,k) =
	\sum_{j=0}^{k} \frac{\prod_{i=1}^{n} (\elliptic{j}-\elliptic{-n+i})}{\prod_{\substack{i=0\\ i\neq j}}^{k} (\elliptic{j}-\elliptic{i})} 
\end{equation}
by comparing \eqref{eqn:eLah_con} with \eqref{eqn:erook_con} or directly with \eqref{eqn:beta2}.

\section{Eulerian numbers}\label{sec:eulerian}
The \defn{Eulerian numbers}, $A(n,k)$, play an important role in combinatorics. We follow the notation of Carlitz \cite{C3} and define Eulerian numbers by the recurrence relation
\begin{subequations} \label{eqn:eulerian_rec}
	\begin{align}
		A(n,k) &= 0, \qquad \text{for $k<0$ or $k>n$}, \\
		A(0,0) &= 1, \\
		\intertext{and, for $k \geq 0$,}
		A(n+1,k) &= (n-k+2) A(n,k-1) + k A(n,k).
	\end{align}
\end{subequations}
The Eulerian numbers can also be defined as the connection coefficients in the identity
\begin{align}\label{eqn:eulerian_con}
	z^n = \sum_{k=0}^{n} A(n,k) \binom{z+k-1}{n},
\end{align}
which is known as Worpitzky's identity. Besides several other properties, Eulerian numbers can be expressed by the explicit sum formula
\begin{align}
	\label{eqn:eulerian_exp}
	A(n,k) = \sum_{j = 0}^{k} (-1)^j \binom{n+1}{j} (k-j)^n.
\end{align}
We recommend \cite{EN} for an introduction to Eulerian numbers. 

Carlitz introduced a $q$-analogue of Eulerian numbers \cite{C3}, the \defn{$q$-Eulerian numbers}, $A_q(n,k)$, and defined them as the connection coefficients in 
\begin{align}\label{eqn:q_eulerian_con}
	[z]_q^n = \sum_{k=0}^{n} A_q(n,k) \qBinom{z+k-1}{n}.
\end{align}
He derives from this relation that $A_q(n,k)$ satisfies
\begin{subequations} \label{eqn:qeulerian_rec}
	\begin{align}
		A_q(n,k) &= 0, \qquad \text{for $k<0$ or $k>n$ }, \\
		A_q(0,0) &= 1, \\
		\intertext{and, for $k \geq 0$,}
		A_q(n+1,k) &= [n-k+2]_q A_q(n,k-1) + q^{n-k+1} [k]_q A_q(n,k)
	\end{align}
\end{subequations}
and the explicit formula (see \cite[Equation (5.5), corrected]{C3})
\begin{align}\label{eqn:qeulerian_sum}
	A_q(n,k) = q^{\binom{n-k+1}{2}-\binom{k}{2}} \sum_{j=0}^{k} (-1)^{j} q^{\binom{j}{2}} \qBinom{n+1}{j} [k-j]_q^n.
\end{align}

\subsection{Generalized Eulerian numbers}
It turns out that it is possible to generalize Carlitz' $q$-Eulerian numbers similarly to the generalizations in the previous sections to elliptic Eulerian numbers and a Laurent polynomial analogue. We begin by extending $\va$ to variables with negative index by $\va \eqdef \dots,a_{-2},a_{-1},a_0,a_1,a_2,\dots$ and define generalized Eulerian numbers, $A_\va(n,k)$, as the connection coefficients in
\begin{align}\label{eqn:a_eulerian_con}
	z^n = \sum_{k=0}^{n} A_\va(n,k) \prod_{i=1}^{n} \frac{z-a_{i-k}}{a_{n-k+1}-a_{i-k}}.
\end{align}
This definition was chosen due to the fact that these coefficients can be characterized by a nice recursion, and we will derive a simple explicit sum formula for $A_\va(n,k)$. 

Note that $A_\va(n,k)=0$ for $k<0$ or $k>n$ and $A_\va(0,0)=1$.
We can derive a recursion for $A_\va(n,k)$ by using the identity
\begin{equation}\label{eqn:rec_hilfe}
	z = \frac{a_{n-k+1}(z-a_{-k})-a_{-k}(z-a_{n-k+1})}{a_{n-k+1}-a_{-k}}
\end{equation}
and we introduce the notation
\begin{equation}
	P(n,k) \eqdef \prod_{i=1}^{n+1} \frac{a_{n-k+2}-a_{i-k}}{a_{n-k+1}-a_{i-1-k}}.
\end{equation}
By multiplying $z$ to both sides of \eqref{eqn:a_eulerian_con} using \eqref{eqn:rec_hilfe} we obtain
\begin{align*}
	z^{n+1} &= \sum_{k=0}^{n} A_\va(n,k) \Big(\prod_{i=1}^{n} \frac{z-a_{i-k}}{a_{n-k+1}-a_{i-k}}\Big) \frac{a_{n-k+1}(z-a_{-k})-a_{-k}(z-a_{n-k+1})}{a_{n-k+1}-a_{-k}} \\	
	&=\sum_{k=0}^{n} A_\va(n,k) a_{n-k+1}  \prod_{i=0}^{n} \frac{z-a_{i-k}}{a_{n-k+1}-a_{i-k}} \\	
	&\qquad -\sum_{k=0}^{n} A_\va(n,k) a_{-k}  \prod_{i=0}^{n} \frac{z-a_{i-k+1}}{a_{n-k+1}-a_{i-k}} \\	
	&=\sum_{k=1}^{n+1} A_\va(n,k-1) a_{n-k+2}  \prod_{i=1}^{n+1} \frac{z-a_{i-k}}{a_{n-k+2}-a_{i-k}} \\	
	&\qquad -\sum_{k=0}^{n} A_\va(n,k) a_{-k}  \Big( \prod_{i=1}^{n+1} \frac{a_{n-k+2}-a_{i-k}}{a_{n-k+1}-a_{i-1-k}} \Big) \prod_{i=1}^{n+1} \frac{z-a_{i-k}}{a_{n-k+2}-a_{i-k}} \\	
	&=\sum_{k=0}^{n+1} \Big(a_{n-k+2} A_\va(n,k-1) - a_{-k} P(n,k) A_\va(n,k)\Big)  \prod_{i=1}^{n+1} \frac{z-a_{i-k}}{a_{n-k+2}-a_{i-k}}.
\end{align*}
By comparing the last sum with the definition of $A_\va(n+1,k)$ in \eqref{eqn:a_eulerian_con} we obtain the recursive description for the generalized Eulerian numbers
\begin{subequations} \label{eqn:a_eulerian_rec}
	\begin{align}
		A_\va(n,k) &= 0, \qquad \text{for $k<0$ or $k>n$}, \\
		A_\va(0,0) &= 1, \\
		\intertext{and, for $k \geq 0$,}
		A_\va(n+1,k) &=a_{n-k+2} A_\va(n,k-1) - a_{-k} P(n,k) A_\va(n,k).
	\end{align}
\end{subequations}
Note that in comparison to the generalized Stirling, Lah and rook numbers, $A_\va(n,k)$ is not a polynomial in the variables $\va$ due to the factor $P(n,k)$ in the recursion. 
However, $P(n,k)$ factorizes to $1$ or a power of $q$ in the following specializations.
\begin{remark}[Specializations]
	By setting $a_i=i$ for all $i \in \Z$, we obtain $P(n,k)=1$ and $A_\va(n,k)$ reduces to the ordinary Eulerian numbers $A(n,k)$.
	
	By setting $a_i=[i]_q$ for all $i \in \Z$, we obtain $P(n,k)=q^{n+1}$, $a_{-k}=[-k]_q = - q^{-k}[k]_q$, thus we can see by comparing the recursion formulae \eqref{eqn:a_eulerian_rec} and \eqref{eqn:qeulerian_rec} (or by comparing the definitions) that $A_\va(n,k)$ reduces to Carlitz' $q$-Eulerian numbers $A_q(n,k)$.
	
	We study more specializations in Section~\ref{sec:eulerian_spec}.
\end{remark}
\subsection{Explicit formula for $A_\va(n,k)$}
In this subsection, we derive an explicit formula for the generalized Eulerian numbers.
\begin{theorem}\label{thm:aeulerian_sum}
	For $n \geq 0$ and $0 \leq k \leq n$ there holds
	\begin{equation}\label{eqn:A_explicit}
		A_\va(n,k) = \sum_{j=0}^{k} \Big( \prod_{\substack{i=0\\ i\neq k-j}}^{n} \frac{a_{n-k+1}-a_{i-k}}{a_{-j}-a_{i-k}} \Big) a_{-j}^n.
	\end{equation}
\end{theorem}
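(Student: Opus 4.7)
The plan is to introduce, for each $k\in\{0,1,\dots,n\}$, a linear functional $\Lambda_k$ on the $(n+1)$-dimensional space of polynomials of degree at most $n$ that extracts the coefficient of the $k$-th basis polynomial $P_k(z)\eqdef\prod_{i=1}^{n}\tfrac{z-a_{i-k}}{a_{n-k+1}-a_{i-k}}$ appearing in \eqref{eqn:a_eulerian_con}. Writing $\lambda_{j,k}$ for the product factor in \eqref{eqn:A_explicit}, I would set
$\Lambda_k[f]\eqdef\sum_{j=0}^{k}\lambda_{j,k}\,f(a_{-j})$,
so that the theorem is equivalent to the assertion $\Lambda_k[z^n]=A_\va(n,k)$. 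Since the $P_\ell$'s form a basis of the ambient space, once $\Lambda_k[P_\ell]=\delta_{k\ell}$ is established for all $\ell\in\{0,\dots,n\}$, applying $\Lambda_k$ to both sides of \eqref{eqn:a_eulerian_con} immediately yields $A_\va(n,k)=\Lambda_k[z^n]=\sum_{j=0}^{k}\lambda_{j,k}a_{-j}^n$, which is the claim.

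The structural observation that drives the argument is the identity $\lambda_{j,k}=L_{k-j}^{(k)}(a_{n-k+1})$, where
$L_m^{(k)}(z)\eqdef\prod_{i=0,\,i\neq m}^{n}\tfrac{z-a_{i-k}}{a_{m-k}-a_{i-k}}$
is the $m$-th Lagrange basis polynomial for interpolation at the $n+1$ points $a_{-k},a_{1-k},\dots,a_{n-k}$. Consequently, for every polynomial $g$ of degree at most $n$, Lagrange interpolation evaluated at $z=a_{n-k+1}$ provides the master identity
\[
g(a_{n-k+1})=\sum_{j=k-n}^{k}\lambda_{j,k}\,g(a_{-j}),
\]
which will serve as the main technical tool for verifying the orthogonality relations.

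I would establish $\Lambda_k[P_\ell]=\delta_{k\ell}$ by a case analysis on the sign of $\ell-k$. For $\ell>k$, each $a_{-j}$ with $j\in\{0,\dots,k\}$ lies in the zero set $\{a_{1-\ell},\dots,a_{n-\ell}\}$ of $P_\ell$, so every summand of $\Lambda_k[P_\ell]$ vanishes trivially. For $\ell=k$, all $j<k$ terms vanish for the same reason, leaving the single term $\lambda_{k,k}P_k(a_{-k})$, which a short computation confirms equals $1$. For the essential case $\ell<k$, I would apply the master identity to $g=P_\ell$: the point $a_{n-k+1}$ is a zero of $P_\ell$ since $n-k+1\in\{1-\ell,\dots,n-\ell\}$, so the left-hand side is $0$, and for each $j\in\{k-n,\dots,-1\}$ the index $|j|$ satisfies $1\leq|j|\leq n-k\leq n-\ell$, so $a_{-j}=a_{|j|}$ also lies in the zero set of $P_\ell$ and the negative-$j$ terms vanish; what remains is precisely $\sum_{j=0}^{k}\lambda_{j,k}P_\ell(a_{-j})=\Lambda_k[P_\ell]$, forced to be $0$.

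The main obstacle is clearly the case $\ell<k$, which hinges on the somewhat delicate coincidence that both the Lagrange evaluation point $a_{n-k+1}$ and every discarded positive-index point $a_1,\dots,a_{n-k}$ sit simultaneously inside the zero set $\{a_{1-\ell},\dots,a_{n-\ell}\}$ of $P_\ell$; the other two cases are routine bookkeeping. Once the orthogonality relations are in hand, the formula $A_\va(n,k)=\Lambda_k[z^n]=\sum_{j=0}^{k}\lambda_{j,k}a_{-j}^n$ follows at once from the defining identity \eqref{eqn:a_eulerian_con}.
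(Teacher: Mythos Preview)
Your argument is correct and is, at its core, the same Lagrange-interpolation mechanism as the paper's proof: both reduce the theorem to the orthogonality $\sum_{j}\lambda_{j,k}P_\ell(a_{-j})=\delta_{k\ell}$ and then establish this identity via Lagrange interpolation. The packaging differs slightly. The paper substitutes the expansion \eqref{eqn:f_eul_con} of $f$ back into the claimed formula, factors out common terms, and recognises the remaining sum over $j\in\{l,\dots,k\}$ as the Lagrange interpolation of the degree-$(k-l)$ polynomial $g(z)=\prod_{i=1}^{k-l}(z-a_{i+n-k})$ at the nodes $a_{-l},\dots,a_{-k}$, evaluated at $a_{n-k+1}$. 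You instead observe at the outset that each weight $\lambda_{j,k}$ is itself a Lagrange basis polynomial evaluated at $a_{n-k+1}$, which yields a single ``master identity'' valid for all polynomials of degree $\le n$; the case $\ell<k$ then follows by applying it directly to $P_\ell$ and checking that both the value at $a_{n-k+1}$ and the extraneous terms with $j<0$ vanish. Your formulation is somewhat more conceptual and treats the three cases uniformly, while the paper's manipulation is more hands-on; but the underlying idea---Lagrange interpolation pinning down the inner sum---is the same in both.
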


\begin{proof}
	We begin by letting $f(z)$ be a polynomial in $a_z$ of degree at most $n$ and let $C_{n,k}$ be the connection coefficients in
	\begin{equation}\label{eqn:f_eul_con}
		f(z) = \sum_{k=0}^{n} C_{n,k} \prod_{i=1}^{n} \frac{a_z-a_{i-k}}{a_{n-k+1}-a_{i-k}}
	\end{equation}
	and we claim that
	\begin{equation}\label{eqn:f_eul_expl}
		C_{n,k} = \sum_{j=0}^{k} \Big( \prod_{\substack{i=0\\ i\neq k-j}}^{n} \frac{a_{n-k+1}-a_{i-k}}{a_{-j}-a_{i-k}} \Big) f(-j).
	\end{equation}
	We prove this claim by inserting \eqref{eqn:f_eul_con} into the right side of \eqref{eqn:f_eul_expl} to derive
	\begin{align*}
		 &\sum_{j=0}^{k} \Big( \prod_{\substack{i=0\\ i\neq k-j}}^{n} \frac{a_{n-k+1}-a_{i-k}}{a_{-j}-a_{i-k}} \Big) f(-j) \\
		 &=\sum_{j=0}^{k} \Big( \prod_{\substack{i=0\\ i\neq k-j}}^{n} \frac{a_{n-k+1}-a_{i-k}}{a_{-j}-a_{i-k}} \Big) \sum_{l=0}^{n} C_{n,l} \prod_{i=1}^{n} \frac{a_{-j}-a_{i-l}}{a_{n-l+1}-a_{i-l}}\\
		  &=\sum_{l=0}^{n} C_{n,l} \sum_{j=0}^{k} \Big( \prod_{\substack{i=0\\ i\neq k-j}}^{n} \frac{a_{n-k+1}-a_{i-k}}{a_{-j}-a_{i-k}} \Big) \prod_{i=1}^{n} \frac{a_{-j}-a_{i-l}}{a_{n-l+1}-a_{i-l}} \\
		  &= \sum_{l=0}^{k} C_{n,l} \sum_{j=l}^{k} \Big( \prod_{\substack{i=0\\ i\neq k-j}}^{n} \frac{a_{n-k+1}-a_{i-k}}{a_{-j}-a_{i-k}} \Big) \prod_{i=1}^{n} \frac{a_{-j}-a_{i-l}}{a_{n-l+1}-a_{i-l}}
	\end{align*}	
	We prove that the last expression is equal to $C_{n,k}$ by showing that
	\begin{equation}\label{eqn:euler_proof_delta}
		\sum_{j=l}^{k} \Big( \prod_{\substack{i=0\\ i\neq k-j}}^{n} \frac{a_{n-k+1}-a_{i-k}}{a_{-j}-a_{i-k}} \Big) \prod_{i=1}^{n} \frac{a_{-j}-a_{i-l}}{a_{n-l+1}-a_{i-l}} = \delta_{k,l}
	\end{equation}
	for $0 \leq l \leq k \leq n$. First, we factor out 
	\begin{equation}\label{eqn:euler_proof_lagrange}
		\frac{ \prod_{i=k-l+1}^{n} (a_{n-k+1}-a_{i-k})}{ \prod_{i=1}^{n} (a_{n-l+1}-a_{i-l})}
		\sum_{j=l}^{k} \Big( \prod_{\substack{i=0\\ i\neq k-j}}^{k-l} \frac{a_{n-k+1}-a_{i-k}}{a_{-j}-a_{i-k}} \Big) \prod_{i=1}^{k-l} (a_{-j}-a_{i+n-k})
	\end{equation}
	to obtain that the sum in this expression is the Lagrange interpolation polynomial of the polynomial
	\[
		g(z) = \prod_{i=1}^{k-l} (z-a_{i+n-k}),
	\]
	for the $k-l+1$ values $a_{-l}, a_{-l-1}, \dots, a_{-k}$ inserted for $z$ and $z$ is replaced by $a_{n-k+1}$. Since $g(z)$ is a polynomial in the variable $z$ of degree $k-l$, the Lagrange polynomial and $g(z)$ are equal, such that \eqref{eqn:euler_proof_lagrange} is equal to
	\begin{equation}
	\frac{ \prod_{i=k-l+1}^{n} (a_{n-k+1}-a_{i-k})}{ \prod_{i=1}^{n} (a_{n-l+1}-a_{i-l})} \prod_{i=1}^{k-l} (a_{n-k+1}-a_{i+n-k}) = \delta_{k,l}.
	\end{equation}
	We therefore proved \eqref{eqn:euler_proof_delta} and \eqref{eqn:f_eul_expl}. By comparing \eqref{eqn:f_eul_con} with \eqref{eqn:a_eulerian_con} and setting $f(z) = a_z^n$, we complete the proof of the theorem.
\end{proof}
\subsection{Specializations}\label{sec:eulerian_spec}
In the literature, several generalizations of Eulerian numbers and $q$-Eulerian numbers were studied. It appears that some generalizations are special cases of $A_\va (n,k)$. 

For example, the \defn{$r$-Whitney Eulerian numbers} \cite{MRSV, RVV} satisfy (see \cite[Equation (6)]{RVV})
\begin{subequations} \label{eqn:r_W_eulerian_rec}
	\begin{align}
		A_{m,r}(n,k) &= 0, \qquad \text{for $k<0$ or $k>n$}, \\
		A_{m,r}(0,0) &= 1, \\
		\intertext{and, for $k \geq 0$,}
		A_{m,r}(n+1,k) &=(m(n-k+2)-r) A_{m,r}(n,k-1) + (mk+r) A_{m,r}(n,k).
	\end{align}
\end{subequations}
The $r$-Whitney Eulerian numbers reduce to classical Eulerian numbers for $(m,r)=(1,0)$ and to the $q$-Eulerian numbers due to Brenti \cite{Brenti} for $(m,r)=(q+1,1)$. By comparing \eqref{eqn:r_W_eulerian_rec} with \eqref{eqn:a_eulerian_rec} one can see that setting $a_i = m i - r$ in $A_\va(n,k)$ yields $A_{m,r}(n,k)$, since the polynomials $P(n,k)$ reduce to $1$ in this case. Furthermore, for $z=mx-r$ the Worpitzky identity \eqref{eqn:a_eulerian_con} reduces to 
\[
	(mx-r)^n = \sum_{k=0}^n A_{m,r}(n,k) \binom{x+k-1}{n}
\]
which is in fact different to the Worpitzky identity given in \cite[Theorem 5]{RVV}. The explicit formula \eqref{eqn:A_explicit} yields \cite[Theorem 6]{RVV}.

A $q$-analogue of the $r$-Whitney Eulerian numbers, $A_q^{m,r}(n,k)$, can be obtained by setting $a_i = [mi-r]_q$. From \eqref{eqn:a_eulerian_rec} we obtain the recursion
\begin{subequations} \label{eqn:q_r_W_eulerian_rec}
	\begin{align}
		A_q^{m,r}(n,k) &= 0, \qquad \text{for $k<0$ or $k>n$ }, \\
		A_q^{m,r}(0,0) &= 1, \\
		\intertext{and, for $k \geq 0$,}
		A_q^{m,r}(n+1,k) &=[m(n-k+2)-r]_q A_q^{m,r}(n,k-1) \notag \\
		&+ q^{m(n+1)-mk-r} [mk+r]_q A_q^{m,r}(n,k),
	\end{align}
\end{subequations}
For $z = [mx-r]_q$ the Worpitzky identity \eqref{eqn:a_eulerian_con} reduces to
\begin{equation}
	[mx-r]_q^n = \sum_{k=0}^{n} A_q^{m,r}(n,k) \eBinom{x+k-1}{n}{q^m}
\end{equation}
and from \eqref{eqn:A_explicit} we obtain the explicit formula
\begin{equation}
	A_q^{m,r}(n,k) = \sum_{j=0}^{k} (-1)^j q^{m\binom{n-j+1}{2}-n(m(k-j)+r)} \eBinom{n+1}{j}{q^m} [m(k-j)+r]_q^n.
\end{equation}
For $(m,r)=(1,0)$ this reduces to Carlitz' $q$-Eulerian numbers \eqref{eqn:q_eulerian_con}. The case $(m,r)=(m,1)$ was studied in \cite{CM} as a $q$-Eulerian number analogue for the wreath product $C_r \wr \mathfrak{S}_n$. For the case $(m,r)=(2,1)$ we get the $q$-Eulerian numbers of type $B$ \cite{CG}. 

\subsection{Elliptic Eulerian numbers}
One of the main motivations of this article was to introduce an elliptic analogue of Carlitz' $q$-Eulerian numbers. We will obtain such an analogue by specializing $A_\va(n,k)$ in the obvious way. 
We define \defn{elliptic Eulerian numbers} $A_{a,b;q,p}(n,k)$ by specializing $a_i=[i]_{a,b;q,p}$ in $A_\va(n,k)$ to obtain the recurrence relation
\begin{subequations} \label{eqn:ell_eulerian_rec}
	\begin{align}
		A_{a,b;q,p}(n,k) &= 0, \qquad \text{for $k<0$ or $k>n$}, \\
		A_{a,b;q,p}(0,0) &= 1, \\
		\intertext{and, for $k \geq 0$,}
		A_{a,b;q,p}(n+1,k) &=[n-k+2]_{a,b;q,p} A_\va(n,k-1) \nonumber \\
		&\quad + W_{a,b;q,p}(-1) [k]_{1/a,b/a;q,p} P_{a,b;q,p}(n,k) A_\va(n,k),
	\end{align}
\end{subequations}
where
\[
	P_{a,b;q,p}(n,k)=W_{aq^{-2k},bq^{-k};q,p}(n+1)\prod_{i=1}^{n+1} \frac{[n-i+2]_{aq^{2(i-k)},bq^{i-k};q,p}}{[n-i+2]_{aq^{2(i-k-1),bq^{i-k-1};q,p}}},
\]
where we used \eqref{eqn:weightshift} and \eqref{eqn:minuselliptic}. 
For the elliptic Eulerian numbers the generalized Worpitzky identity 
\begin{equation}
	\elliptic{z}^n = \sum_{k=0}^{n} A_\va(n,k) \prod_{i=1}^{n} \frac{[z-i+k]_{aq^{2(i-k)},bq^{i-k};q,p}}{[n-i+1]_{aq^{2(i-k)},bq^{i-k};q,p}}
\end{equation}
holds and from Theorem~\ref{thm:aeulerian_sum} we obtain the explicit sum-formula
\begin{align}
	A_{a,b;q,p} &= \sum_{j=0}^{k} \Big( \prod_{\substack{i=0\\ i\neq k-j}}^{n} \frac{[n-i+1]_{aq^{2(i-k)},bq^{i-k};q,p}}{[k-j-i]_{aq^{2(i-k)},bq^{i-k};q,p}} \Big) \elliptic{-j}^n\\
	&=(-1)^n W_{a,b;q,p}(-1)^n \sum_{j=0}^{k} \Big( \prod_{\substack{i=0\\ i\neq j}}^{n} \frac{[n-i+1]_{aq^{2(i-k)},bq^{i-k};q,p}}{[j-i]_{aq^{2(i-k)},bq^{i-k};q,p}} \Big) \elliptic{k-j}^n.
\end{align}
An \defn{elliptic analogue of the $r$-Whitney Eulerian numbers}, $A_{a,b;q,p}^{m,r}(n,k)$, can be obtained by specializing $a_i=[mi-r]_{a,b;q,p}$ in $A_\va(n,k)$. We omit the details here.
\section{Acknowledgements}
This work is based on research which is a part of the author’s doctoral thesis at the University of Vienna \cite{JosefDiss}. The author is grateful to Ilse Fischer, Hans H\"ongesberg, Michael Schlosser and Meesue Yoo for helpful and inspiring discussions.
\bibliographystyle{plain}
\bibliography{bibliography}

\end{document}